\documentclass{article}

\usepackage[ruled, linesnumbered, commentsnumbered]{algorithm2e}
\usepackage{bm}

\usepackage[utf8]{inputenc}
\usepackage[english]{babel}
\usepackage[T1]{fontenc}

\usepackage{amsthm}
\usepackage{mathrsfs}
\usepackage{fancyhdr}

\usepackage{hyperref}
\usepackage{bm}
\usepackage{graphicx,mathptmx,amsmath,amsfonts,amssymb}

\usepackage{authblk}
\usepackage{fullpage}

\theoremstyle{plain}
\newtheorem{theorem}{Theorem}[section]

\theoremstyle{remark}

\newtheorem{definition}[theorem]{Definition}


\newcommand{\mean}{\mathbb{E}}
\newcommand{\var}{\mathbb{V}}

\newcommand{\mode}{\mathsf{mode}}

\newcommand{\cond}{\,|\,}

\newcommand{\tr}{\text{tr}}
\newcommand{\TV}{\text{TV}}

\DeclareMathOperator{\Normal}{Normal}
\DeclareMathOperator{\Laplace}{Laplace}

\DeclareMathOperator{\Exp}{Exp}
\DeclareMathOperator{\Gam}{Gamma}

\DeclareMathOperator{\InvGam}{InvGamma}

\DeclareMathOperator{\GIG}{GIG}
\DeclareMathOperator{\RIG}{RIG}

\DeclareMathOperator{\KL}{KL} 
\DeclareMathOperator{\Besselk}{K} 
\DeclareMathOperator{\ML}{MVLaplace} 

\bmdefine{\Blambda}{\lambda}
\bmdefine{\Bnu}{\nu}
\bmdefine{\Balpha}{\alpha}
\bmdefine{\Bbeta}{\beta}
\bmdefine{\Bgamma}{\gamma}
\bmdefine{\Bdelta}{\delta}
\bmdefine{\Btheta}{\theta}
\bmdefine{\Bepsilon}{\epsilon}

\newcommand{\Br}{\mathbf{r}}
\newcommand{\Bx}{\mathbf{x}}
\newcommand{\By}{\mathbf{y}}
\newcommand{\Bz}{\mathbf{z}}
\newcommand{\Bw}{\mathbf{w}}

\newcommand{\la}{\alpha_{\lambda}}
\newcommand{\lb}{\beta_{\lambda}}
\newcommand{\nga}{\alpha_{\nu}}
\newcommand{\ngb}{\beta_{\nu}}


\newcommand{\argmin}[1]{\underset{#1}{\operatorname{arg}\operatorname{min}}\;}
\newcommand{\Id}{\text{I}}

\newcommand{\e}{\mathrm{e}}

\newcommand{\Lone}{L^{1}}
\newcommand{\Ltwo}{L^{2}}

\newcommand*{\half}[1][2]{\frac{1}{#1}}
\DeclareMathOperator{\diag}{diag}		
\newcommand*{\Nul}{{\mathcal{N}}}   


\begin{document}



\title{Bayesian Hierarchical Model of Total Variation Regularisation for Image Deblurring}


\author[1]{M. Järvenpää}
\author[2]{R. Pich\'e}
\affil[1]{Department of Mathematics, Tampere University of Technology, Tampere, Finland}
\affil[2]{Department of Automation Science and Engineering, Tampere University of Technology, Tampere, Finland}

\date{\today}

\maketitle

\begin{abstract}
A Bayesian hierarchical model for total variation regularisation is presented in this paper. All the parameters of an inverse problem, including the 'regularisation parameter', are estimated simultaneously from the data in the model. 
The model is based on the characterisation of the Laplace density prior as a scale mixture of Gaussians. With different priors on the mixture variable, other total variation like regularisations e.g.~a prior that is related to t-distribution, are also obtained. 
An approximation of the resulting posterior mean is found using a variational Bayes method. In addition, an iterative alternating sequential algorithm for computing the maximum a posteriori estimate is presented. The methods are illustrated with examples of image deblurring. Results show that the proposed model can be used for automatic edge-preserving inversion in the case of image deblurring. Despite promising results, some difficulties with the model were encountered and are subject to future work. \\


{\bf Keywords:} Bayesian statistics; hierarchical model; total variation regularisation; Gaussian scale mixture; image deblurring \\

{\bf AMS Subject Classification:} 62F15; 47A52


\end{abstract}

\section{Introduction} \label{sec:intro}

Total variation (TV) regularisation, initially presented in \cite{Rudin1992}, is a popular alternative for restoring `blocky' images. It penalises non-smoothness in the solution while allowing occasional `jumps'. This is useful if one wants to recover 'edges' of an image from its noisy and blurred copy. The total variation regularisation penalty term is, however, more difficult to deal with than, for example, the $\Ltwo$ norm since it is not differentiable at the origin. Bayesian approach for TV regularisation is useful because is allows a natural framework for taking prior information into account and the uncertainty of the results can be assessed. This approach also enables estimation of nuisance parameters related to the inverse problem. 

In the literature there have been several studies related to the topic. A hierarchical Bayesian model for the $\Lone$ regularisation (which is also known as Lasso in the context of regression problems) was studied in \cite{Figueiredo2003, Park2008, Kyung2010}. In these papers the penalty term is interpreted as the Laplace prior which is modeled as a Gaussian scale mixture (GSM) (\cite{Andrews1974, Kotz2001}) leading to a hierachical model that is easier to deal with. This approach is related to the model presented in this work. In \cite{Kyung2010} also, a statistical model for fused Lasso featuring two penalisation terms, one $\Lone$ and the other TV penalty, was analysed in the Bayesian setting. Laplace priors have also been considered e.g.~in compressive sensing \cite{Babacan2010} and classification problems \cite{Kaban07}. 
A fully Bayesian model of Tikhonov regularisation was studied in \cite{Jin2010}, where the variational Bayes (VB) (\cite{Bishop2006, MacKay2003}) method was used. Bayesian hierarchical models are also used in several inverse problem research projects, see for instance \cite{Calvetti2010, Calvetti2008b, Calvetti2008, Wipf2009, Nummenmaa2007, Lucka11}. Fast $\Lone$ sampling methods have been considered for example in \cite{Lucka12}.
Bayesian models of TV regularisation for image processing problems have been studied in \cite{Babacan2007, Babacan09, Chantas2010, Calvetti2008b, Calvetti2008}. There also exists several fast optimisation algorithms (see e.g. \cite{Zuo2011, GoldsteinO09}) for solving TV regularisation problems in deterministic framework. 

While several Bayesian hierarchial models of TV regularisation have been proposed, in many cases the regularisation parameter is considered unknown but fixed. Especially in a deterministic optimisation approach, auxiliary methods (and possibly some trial and error) have to be used for choosing the regularisation parameter. In our approach we consider estimating all the parameters of the inverse problem simultaneously from the data. We also study TV priors that can be presented as Gaussian scale mixtures. In \cite{Babacan2007, Babacan09} the regularisation parameter was estimated as a part of their hierarchical model as in out case but they did not use Gaussian mixtures but considered isotropic version of the TV prior and some approximations had to be used. We avoid this difficulty. Also, instead of providing only sampling based solution for the inverse problem, we consider iterative alternating sequential (IAS) method for solving the maximum a posteriori (MAP) estimate although this is not fully Bayesian approach. We also consider using Variational Bayes method to approximating the posterior. 

The idea of presenting Laplace prior as a Gaussian scale mixture encourages to try other mixing densities that, to the best of ours knowledge, have not been considered in literature. This leads to more general TV like penalties than the well-known anisotropic TV penalty. These priors have also heavy tails promoting `sparsity' which make them useful for edge-preserving image reconstruction. 
For example, Student's t-distribution is a Gaussian scale mixture with inverse gamma mixing density and using generalised inverse Gaussian  as mixing density leads to some other interesting priors. In addition, we also consider a possible alternative for isotropic TV prior using two-dimensional Laplace distribution. Similar multidimensional distribution has been studied in \cite{Eltoft2006,Kotz2001} but this distribution has not been considered to be used as a TV type prior. Furthermore, a hierarchical model for the Lasso is obtainable as a special case of our formulation although we will focus mainly on total variation regularisation in this paper.

The rest of this paper is organised as follows. In the next section we introduce the familiar linear model and the idea of TV regularisation. In Section \ref{sec:model} we present the hierarchical model, derive formulas for posterior mode and mean using IAS and VB methods. Section \ref{sec:extensions} contains discussion about certain technical details, special cases and remarks. In Section \ref{sec:examples} some image deblurring examples are considered. Finally, the summary of this work is given and possible ideas for future work are discussed in Section \ref{sec:concl}. Some results and remarks of the probability distributions appearing in this work are gathered to the appendix.

\section{Problem formulation} \label{sec:problem}

The classical method for solving a linear discrete system 
\begin{equation} \label{eq:first}
 y = Hx + \text{noise},
\end{equation}
where $H$ is a given (blurring) matrix, $x$ a vector (the image) to be solved and $y$ a given vector (the measured image), is to formulate it as an optimisation problem, in particular as a least squares problem. However, if the matrix has nontrivial nullspace, for instance, it has more columns than rows, the problem has no unique solution. Even though the matrix is theoretically non-singular as is typically the case in the image deblurring problem, the matrix can be so close to being singular that numerical problems arise in practice if naive matrix inversion is tried. This issue is typically dealt with by introducing a penalisation term and approximating the original ill-posed problem with a problem that is well-posed. 
This approach leads to the problem of solving 
\begin{equation} \label{eq:1st}
 \argmin{x} \{\| Hx-y \|^2 + \delta J(x) \},
\end{equation}
where $\delta > 0$ is a regularisation parameter and $J(x)$ is a regularisation penalty, often $\Ltwo$ or $\Lone$ norm on $x$. If $\delta = 0$ then the optimisation problem (\ref{eq:1st}) simplifies to the original least-squares formulation. 

The difficulty in the deterministic approach is that one needs to select some proper value for the regularisation parameter. If too large value is chosen, the solution is dominated by the penalty term. On the other hand, if too small value is chosen, then the problem is close the original ill-posed case. Also finding the minimum by using some optimisation algorithm can be difficult and the uncertainty in the obtained result cannot be easily assessed.

Next we mention some remarks on our notation. We will consider images with size $k \times n$ pixels and the matrix $H$ will be of size $kn \times kn$. Images are considered as $k \times n$ matrices as well but for the rest of this paper we will consider them as column-wise stacked vectors having length $N=kn$ without using any special notation. For notational simplicity indexing based on presentation of these stacked vectors as arrays is used. That is, the pixel $(i,j)$ of the image $x$ is denoted as $x_{i,j}$. Also, the following notation is used to denote horizontal and vertical differences between pixels: $\nabla_{ij}^{1}x = x_{i,j+1}-x_{i,j}$ and $\nabla_{ij}^{2}x = x_{i+1,j}-x_{i,j}$, respectively.

With the notation introduced we now introduce briefly the TV penalties in the discrete setting that are considered in this paper. The discretised two-dimensional total variation functional is called isotropic TV and it is defined by
\begin{equation}\label{eq:isotv}
 \TV_{\text{iso}} (x) = \sum_{i=1}^{k} \sum_{j=1}^{n} \sqrt{\left(\nabla_{ij}^{1}x\right)^2 + \left(\nabla_{ij}^{2}x\right)^2}.
\end{equation}
For convenience, we will assume periodic boundary conditions, that is, 
\begin{subequations}
\begin{align}
x_{i,1} &= x_{i,n+1},  \quad i=1,\ldots,k \\
x_{1,j} &= x_{k+1,j}, \quad j=1,\ldots,n . 
\end{align}
\end{subequations}
Many other boundary conditions could also be considered but in this paper we limit to these. We, however, note that our methods will be easily used with other boundary conditions as well. A simple and typical approach is to approximate the isotropic TV penalty (\ref{eq:isotv}) with
\begin{equation}\label{eq:isotv_approx}
 \TV_{\text{iso}} (x) \approx \sum_{i=1}^{k} \sum_{j=1}^{n} \sqrt{\left(\nabla_{ij}^{1}x\right)^2 + \left(\nabla_{ij}^{2}x\right)^2 + \beta},
\end{equation}
with some small $\beta > 0$. This approximation allows using gradient-based optimization methods \cite{Vogel2002}. 

Another TV variant that is sometimes used instead to (\ref{eq:isotv}) is called anisotropic TV and is defined as 
\begin{equation}\label{eq:anisotv}
 \TV_{\text{aniso}} (x) = \sum_{i=1}^{k} \sum_{j=1}^{n} \left\{ \left|\nabla_{ij}^{1}x\right| + \left|\nabla_{ij}^{2}x\right| \right\} .
\end{equation}
This $\Lone$-version is sometimes simply called the TV penalty. This anisotropic version of TV might be somewhat easier to deal with than $\TV_{\text{iso}}$, though neither version is differentiable at the origin. The anisotropic version is also not rotation invariant unlike the isotropic TV and the isotropic TV can be seen as a disretisation of the gradient of TV functional. 
A hierarchical model for isotropic TV as in (\ref{eq:isotv}) has been considered in \cite{Babacan2007} and \cite{Babacan09} but it involves some approximations. The anisotropic TV penalty, however, can be linked to Laplace distribution prior using the Gaussian scale mixture idea and thus in this study, we will instead focus on anisotropic TV. This framework also allows us to study other penalties which we will also call TV penalties (or TV priors in Bayesian setting) since they penalise the differences of neighbouring pixels. 

In the next section we will briefly introduce the Bayesian framework for solving inverse problems and then 
the hierarchical model for the optimisation problem (\ref{eq:1st}) with anisotropic TV penalty (\ref{eq:anisotv}) is discussed in detail.

\section{Bayesian Hierarchical model for TV regularisation} \label{sec:model}

We will first review the basic ideas of Bayesian inversion. In Bayesian statistics the unknowns are modelled as random variables. Let a random vector $\By$ denote the data (e.g. the blurred and noisy image) and $\Bx$ the parameters of interest (the resulting image). (Note that we denote random variables or vectors with boldface characters and fixed values with ordinary characters from now on.) Then the Bayes' law states that
\begin{equation}\label{eq:BBB}
p_{\Bx \cond \By}(x \cond y) = \frac{p_{\Bx}(x) p_{\By \cond \Bx}(y \cond x)}{p_{\By}(y)} \propto p_{\Bx}(x) p_{\By \cond \Bx}(y \cond x),
\end{equation}
where $p_{\By \cond \Bx}(y \cond x)$, the likelihood, denotes the probability of obtaining the data $\By$ if the parameters $\Bx$ were known. Probability distribution $p_{\Bx}(x)$ describes the prior information on parameters before any data is obtained. The result, the posterior distribution $p_{\Bx \cond \By}(x \cond y)$ expresses our information about the image $\Bx$ after both prior information and the data has been taken into account. The proportionality constant $p_{\By}(y) = \int p_{\Bx}(x) p_{\By \cond \Bx}(y \cond x) \mathrm{d} x$ can usually be ignored if one is only interested estimating the parameters of the inverse problem. Now if we assume that our prior also depends on some hyperparameter $\Bw$ we can write the Bayes' law as
\begin{equation}\label{eq:BBB2}
p_{\Bx, \Bw \cond \By}(x, w \cond y) \propto p_{\Bx \cond \Bw}(x \cond w) p_{\Bw}(w) p_{\By \cond \Bx}(y \cond x).
\end{equation}
If we integrate over the hyperparameter $\Bw$, which could be seen as nuisance parameter if we are not interested its particular value, in (\ref{eq:BBB2}) we obtain (\ref{eq:BBB}). We can think (\ref{eq:BBB2}) as a hierarchical model because part of the prior specification involves modelling that is on higher level. For example, we could specify a Gaussian prior for $\Bx$ and set its variance (as $\Bw$) to be Inverse Gamma distributed if we would not want to specify tight value for the variance. More information about Bayesian inversion can be found e.g.~in \cite{Kaipio2004,ohagan2004}.

Let us now consider the statistical version of (\ref{eq:1st}) that is given by the linear Gaussian observation model
\begin{equation}\label{eq:Lin}
\By \cond  (\Bx = x,\Bnu = \nu) \sim \Normal(H x, (\nu \Id)^{-1}).
\end{equation}
Here $\Bnu$ is a precision parameter and $\Id$ denotes the identity matrix. The blurred noisy image $\By$ is now modelled as random vector and, similarly,  the image $\Bx$ to estimated is a random vector. 
The two-dimensional discrete anisotropic TV prior on the coefficients $\Bx$ is 
\begin{equation}   \label{eq:priorTV}
      p_{\Bx\cond \Blambda }(x\cond \lambda)\propto \lambda^{N}
      \e^{-\sqrt{\lambda} \sum\limits_{i=1}^{k} \sum\limits_{j=1}^{n} \left\{ \left|\nabla_{ij}^{1}x \right| + \left|\nabla_{ij}^{2}x \right| \right\} }.
\end{equation}
This TV prior penalises oscillations while allowing occasional jumps. We have know connected the anisotropic TV penalty in Section \ref{sec:problem} to the prior distribution to be used in the Bayesian framework. 
The hyperparameter $\Blambda$ controls now the overall `strength' of the penalisation.

For conjugacy the priors for $\Blambda$ and $\Bnu$ can be chosen to be 
\begin{equation}   \label{eq:priorHyper}
\Blambda \sim \Gam(\la, \lb), \qquad \Bnu \sim \Gam(\nga, \ngb),
\end{equation}
where the parameters $\la, \lb, \nga$ and $\ngb$ are positive constants that, in principle, should be set according to the prior knowledge that is available about the expected values of these parameters. See Appendix \ref{sec:appendix1} for specification of the gamma density and other densities appearing in this paper. 
It is possible to set $\la = 0$ and $\lb = 0$. This `non-informative' prior 
$
p_{\Blambda}(\lambda) \propto \lambda^{-1}
$
can be used if parameters $\la$ and $\lb$ are not to be tuned. Similar non-informative prior can be chosen for $\Bnu$ as well. This way we can, in principle, describe our ignorance about these parameters. Another option could be a 'flat' prior
$
p_{\Blambda}(\lambda) \propto 1.
$
These priors are improper as they do not integrate to $1$. One should note that using an improper prior may or may not lead to an improper posterior and one should always check if this is the case. Nevertheless, we considered the improper priors for the parameters $\Blambda$ and $\Bnu$ in this study and we will discuss later in this paper what kind of issues are related to this choice. We also emphasize at this point that it can be difficult to pick values for these parameters. 

Now with the likelihood~(\ref{eq:Lin}) and the priors (\ref{eq:priorTV}) and (\ref{eq:priorHyper}) the posterior density is, by Bayes' law, 
\begin{align}
  &p_{\Bx,\Bnu,\Blambda \cond \By}( x,\nu,\lambda \cond y ) \nonumber \\
  &\propto \, 
  p_{\Bx,\Bnu,\Blambda}( x,\nu,\lambda) \, p_{\By \cond \Bx,\Bnu,\Blambda}(y \cond x,\nu,\lambda)  \nonumber \\
  &= \,
  p_{\Bx \cond \Blambda}( x \cond \lambda) \, p_{\Blambda}(\lambda) \, p_{\Bnu}(\nu) \, p_{\By \cond \Bx,\Bnu}(y \cond x,\nu)  \nonumber \\
  &\propto \,
  \lambda^{N + \alpha_{\lambda} -1}\nu^{\frac{N}{2} + \alpha_{\nu} -1}
  \e^{ -\left(  \frac{\nu}{2}\|y -H  x \|^2_2
  +\sqrt{\lambda} \sum\limits_{i=1}^{k} \sum\limits_{j=1}^{n} \left\{ \left|\nabla_{ij}^{1}x\right| + \left|\nabla_{ij}^{2}x\right| \right\} + \beta_{\lambda}\lambda + \beta_{\nu}\nu \right)} . 
\end{align}
If the noise and penalisation parameters $\nu$ and $\lambda$ are known, the computation of the MAP estimate can be obtained by computing the minimum of 
\begin{equation}
      \half \|y -H  x \|^2_2  + {\textstyle \frac{\sqrt{\lambda}}{\nu}} 
      \sum_{i=1}^{k} \sum_{j=1}^{n} \left\{ \left|\nabla_{ij}^{1}x\right| + \left|\nabla_{ij}^{2}x\right| \right\}.
\end{equation}
This computation is not trivial due to the absolute values in the TV penalty. The selection of the regularisation parameter $\delta = \frac{\sqrt{\lambda}}{\nu}$ is also a challenge.
However, as shall be shown next, the problem can be made more tractable by introducing additional parameters.


In the discrete TV prior~(\ref{eq:priorTV}), the model coefficients' differences  are conditionally independent Laplace random variables, for instance
\begin{equation} \label{eq:laplacemodel}
    \Bx_{i+1,j}-\Bx_{i,j}\cond (\Blambda = \lambda) \; \stackrel{\text{iid}}{\sim} \; \Laplace(0,\sqrt{\lambda}).
\end{equation}

The multivariate Laplace is a scale mixture of multivariate normal distribution, see Theorem \ref{thm:lgsm} in Appendix \ref{sec:appendix1}. In this case if $\Bx \cond (\Br = r, \Blambda = \lambda) \sim \Normal(0, \frac{2r}{\lambda})$ and $\Br \sim \Exp(1)$ then $\Bx \cond (\Blambda = \lambda) \sim \Laplace(0, \sqrt{\lambda})$.
So, in place of~(\ref{eq:priorTV}), one can use  the prior
\begin{equation}   \label{eq:prior1}
p_{\Bx\cond \Blambda,\Br }(x\cond \lambda,r) \propto 
  \lambda^{N} \prod_{i,j,l}r_{i,j,l}^{-1/2} \,
  \e^{ -\frac{\lambda}{2}\sum\limits_{i=1}^{k} \sum\limits_{j=1}^{n} \left\{ \frac{(\nabla_{ij}^{1}x)^2}{2r_{i,j,1}} + \frac{(\nabla_{ij}^{2}x)^2}{2r_{i,j,2}}\right\}}.
\end{equation}
The difference with model~(\ref{eq:priorTV}) is the addition of  hyperparameters  $\Br_{i,j,l}$, for $i=1,\ldots,k$, $j=1,\ldots,n$, $l=1,2$, where $i$ and $j$ refer to pixels and $l$ either to vertical or horizontal difference of adjacent pixels.
These additional hyperparameters are a-priori independent of $\Blambda$ 
and have exponential prior distributions
\begin{equation}    \label{eq:priorR}
     \Br_{i,j,l} \; \stackrel{\text{iid}}{\sim} \; \Exp(1)
     ,\qquad
     p_{\Br_{i,j,l}}(r_{i,j,l}) = \e^{- {r_{i,j,l}} }, \quad r_{i,j,l} > 0. 
\end{equation}

It is easy to see that
\begin{equation} \label{eq:rdxnorm}
\| R^{-1} D x \|^2_2 = \sum\limits_{i=1}^{k} \sum\limits_{j=1}^{n} \left\{ \frac{(\nabla_{ij}^{1}x)^2}{2r_{i,j,1}} + \frac{(\nabla_{ij}^{2}x)^2}{2r_{i,j,2}}\right\},
\end{equation}
and so the equation (\ref{eq:prior1}) is rewritten in the form
\begin{equation}   \label{eq:prior2}
p_{\Bx\cond \Blambda,\Br }(x\cond \lambda,r) \propto 
  \lambda^{N} \prod_{i,j,l}r_{i,j,l}^{-1/2} \, \e^{-\frac{\lambda}{2}\|  R^{-1} D x \|^2_2 }.
 \end{equation}
In the above formulas $R=\diag(\sqrt{2r}) \in \mathbb{R}^{2N\times 2N}$ with the convention (that we use from now on in this paper) that the square root of a vector is taken component wise. Matrix $D \in \mathbb{R}^{2N\times N}$ above consists of two $N \times N$ blocks corresponding to the differences of components of $x$ in row wise and column wise directions. That is, 
$D=[D_h^T, D_v^T]^T$, where $D_h$ is a $N\times N$ matrix consisting of horizontal differences and similarly 
$N\times N$ matrix $D_v$ contains the vertical differences. This discrete differential operator matrix is assumed throughout this paper. Naturally we could set other kind of matrices $D$. Basically the model allows arbitrary components or their linear combinations to be penalised (as long as the rank condition $\Nul(D) \cap \Nul(H) = \{0\}$, where $\Nul(D)$ denotes the nullspace of matrix $D$, is satisfied). 

The augmented hierarchical model can be summarised by the directed acyclic graph shown in Figure~\ref{fig:dag2}.

Although we are mainly interested in TV and the related Laplace prior, for the sake of generality we will consider more general $\GIG(a,b,p)$ mixing density in the derivations to follow. So we will use
\begin{equation}    \label{eq:priorRgig}
     p_{\Br_{i,j,l}}(r_{i,j,l}) \propto r_{i,j,l}^{p-1} \, \e^{-\half\left(a{r_{i,j,l}} + br_{i,j,l}^{-1}\right) }, \quad r_{i,j,l} > 0.
\end{equation}
in the place of (\ref{eq:priorR}). 
The hierarchical TV prior model is obtainable as a special case because $\GIG(2,0,1) = \Exp(1)$. 


The full posterior for the hierarchical TV model is, by Bayes' law 
\begin{align}
   &p_{\Bx,\Bnu,\Blambda,\Br \cond \By} ( x,\nu,\lambda,r \cond y ) \nonumber \\
  &\propto \,
    p_{\Bx,\Bnu,\Blambda, \Br}(x,\nu,\lambda, r) \,p_{\By \cond \Bx,\Bnu,\Blambda, \Br}(y \cond x,\nu,\lambda, r) \nonumber \\
  &= \,
  p_{\Bx \cond \Blambda, \Br}( x \cond \lambda, r) \, p_{\Blambda}(\lambda) \, p_{\Br}(r) \, p_{\Bnu}(\nu) \, p_{\By \cond \Bx,\Bnu}(y \cond x,\nu) \nonumber \\
  &\propto \,
\lambda^{N +\la -1} \nu^{\frac{N}{2} +\nga - 1}
  \prod_{i,j,l}r_{i,j,l}^{p-3/2} \,
  \e^{-\frac{\nu}{2} \|y-Hx\|^2_2
    -\frac{\lambda}{2}\| R^{-1} D x \|^2_2 - \frac{a}{2}\sum\limits_{i,j,l} r_{i,j,l} - \frac{b}{2}\sum\limits_{i,j,l} r_{i,j,l}^{-1} - \lb\lambda - \ngb\nu} \label{eq:post1} \\
    &=  
  \lambda^{N +\la -1} \nu^{\frac{N}{2} +\nga - 1}
  \prod_{i,j,l}r_{i,j,l}^{p-3/2} \,
  \e^{-\frac{\nu}{2}(x-\hat{x})^TQ(x-\hat{x}) 
  -\frac{\nu}{2}( y^Ty-\hat{x}^TQ\hat{x}) - \frac{a}{2}\sum\limits_{i,j,l} r_{i,j,l} - \frac{b}{2}\sum\limits_{i,j,l} r_{i,j,l}^{-1} - \lb\lambda - \ngb\nu}, \label{eq:post2}
\end{align}
where we have defined $\hat{x} = Q^{-1} H^T y$ and   
$Q = H^TH+{\textstyle\frac{\lambda}{\nu}} D^TR^{-2}D. 
$
It is easy to see that $Q$ is symmetric and positive definite and consequently 
\begin{align} \label{eq:posterior_square_deriv}
 &\|y-Hx\|^2_2 + \frac{\lambda}{\nu}\| R^{-1} D x \|^2_2 \nonumber
 \\ &= (Hx)^T Hx - (Hx)^T y - y^T Hx + y^T y + \frac{\lambda}{\nu}x^TD^TR^{-2}Dx \nonumber
 \\ &=  x^T(H^T H + \frac{\lambda}{\nu}D^TR^{-2}D)x - x^TH^T y - y^T Hx + y^T y \nonumber
 \\ &=  x^TQx - x^TQQ^{-1}H^T y - y^T HQ^{-1}Qx + y^T y \nonumber
 \\ &=  x^TQx - x^TQ\hat{x} - \hat{x}^TQx + \hat{x}^TQ\hat{x} + y^T y - \hat{x}^TQ\hat{x} \nonumber
 \\ &= (x-\hat{x})^TQ(x-\hat{x}) + y^T y - \hat{x}^TQ\hat{x} .
\end{align}
This shows that equations (\ref{eq:post1}) and (\ref{eq:post2}) are indeed equivalent.

\subsection{Conditional densities and MAP estimate}

The conditional distributions are evident by inspection of the full posterior (\ref{eq:post1}) and (\ref{eq:post2}),
\begin{subequations}
\begin{align}
\Bx\cond \nu,\lambda,r, y & \;\sim \;
\Normal(Q^{-1} H^T y,(\nu Q)^{-1}) , 
\\
\Bnu \cond x,\lambda,r, y & \;\sim \; 
      \Gam(\textstyle\frac{N}{2} + \nga,\,  \frac{1}{2}\| y-Hx \|^2_2 + \ngb) ,
\\
\Blambda  \cond x,\nu,r, y & \;\sim \;
      \Gam(N + \la,\,  \textstyle\frac{1}{2} \| R^{-1} D x \|^2_2 + \lb), 
\\
\Br_{i,j,l}  \cond x,\nu,\lambda, r_{-[i,j,l]}, y & \;\sim \;
      \GIG\left(a, \textstyle\frac{1}{2}\lambda (\nabla_{ij}^{l}x)^2 + b, p - \half \right), \nonumber \\
      &\qquad i=1,\ldots,k,\, j=1,\ldots,n, \, l=1,2.
\end{align}
\end{subequations}
The notation $r_{-[i,j,l]}$ means all components of vector $r$ except the $(i,j,l)$th. In the case of anisotropic TV prior (i.e.  $a=2, b=0, p=1$) the last density reverts to 
\begin{equation}
\Br_{i,j,l}  \cond x,\nu,\lambda, r_{-[i,j,l]}, y  \;\sim \;
      \RIG\left(\sqrt{\lambda}|\nabla_{ij}^{l}x|, \textstyle\frac{1}{2}\lambda (\nabla_{ij}^{l}x)^2 \right).
\end{equation}

The MAP estimate for the posterior can be found using IAS (iterative alternating sequential) method that has been used in \cite{Calvetti2008b,Calvetti2008}.
The IAS procedure is to maximize the posterior with respect to each variable (or a group of variables) one at a time keeping all the other variables fixed. One can loop through the variables this way using the newest values of the parameters at each step. The algorithm converges to some local minima since at each step the value of the density cannot decrease. The method is derivative-free. This method is connected to the coordinate descent method in optimisation theory and is presented in e.g.~\cite[Ch. 8.9]{Luenberger2008} and similar method is also known as Lindley-Smith iteration in Bayesian literature \cite{ohagan2004}.

In our case, cycling through the variables can be done via the following equations:
\begin{subequations}
\begin{align}
{x}  & =(H^TH + {\textstyle\frac{{\lambda}}{{\nu}}} D^T{R}^{-2}D)^{-1} H^T y, \label{eq:ias_lin_system}
\\
{\nu} &= \frac{N - 2 + 2\nga}{\| y-H{x} \|^2_2 + 2\ngb} , \label{eq:ias_nu}
\\
{\lambda}  &= 
     \frac{2N - 2 + 2\la}{ \| R^{-1} D x \|^2_2 + 2\lb}, \label{eq:ias_lambda}
\\
{r}_{i,j,l} &=
      \textstyle \frac{p-\frac{3}{2} + \sqrt{\left(p-\frac{3}{2}\right)^2 + a\left(\frac{1}{2}\lambda (\nabla_{ij}^{l}x)^2 + b\right)}}{a}, \nonumber \\ &\quad i=1,\ldots,k,\, j=1,\ldots,n, \, l=1,2.  \label{eq:coordr}
\end{align}
\end{subequations}
These are the formulas for the modes of the conditional densities. In the anisotropic TV case the last equation simplifies to
\begin{equation} \label{eq:coordr_lap}
{r}_{i,j,l} =
      \textstyle -\frac{1}{4} + \frac{1}{4}\sqrt{1 + 4{\lambda}(\nabla_{ij}^{l}x)^2} .
\end{equation}
The resulting IAS prosedure is presented as Algorithm \ref{alg:ias_tv} below.

\begin{algorithm} 
\DontPrintSemicolon
set initial values $r^0, v^0$ and $\lambda^0$ \;
\For{$s=1,2,\ldots$ until stopping criteria is met}{
// use the newest values of the parameters at each step: \;
solve $x^s$ from the linear system (\ref{eq:ias_lin_system}) \; 
compute $\nu^s$ using (\ref{eq:ias_nu}) \;
compute $\lambda^s$ using (\ref{eq:ias_lambda}) \;
\For{all elements in $\{(i,j,l) \,|\, i=1,\ldots,k, \, j=1,\ldots,n, \,  l=1,2 \}$}{
  compute $r_{i,j,l}^s$ from (\ref{eq:coordr}) \; 
}}
return the values $x^\text{end}$, $\nu^\text{end}$, $\lambda^\text{end}$ and $r^{\text{end}}$
\caption{IAS algorithm for TV regularisation in 2d.} \label{alg:ias_tv}
\end{algorithm}

\subsection{Variational Bayes algorithm}

We will derive equations for approximating the posterior mean using VB method. 
The idea of variational inference is to approximate a difficult posterior density to yield useful computational simplifications.
We can sample from the posterior and estimate the conditional mean (CM) using the conditional densities and the resulting Gibbs sampler algorithm. 
However, it is sensible to compute this 'analytic approximation' since sampling based solutions tend to be slow to compute as one typically needs to generate a large number of samples and thus only suitable for small scale problems. Sampling based methods are naturally subject to sampling error and it can be difficult to decide if the MCMC algorithm has converged. 
Compared to MAP estimate, using VB to approximate CM yields also information about the uncertainty of the result and not just point estimates. 


Let $p_{\Bz \cond \By}(z \cond y)$ be the posterior probability density function (pdf) with parameters $z$ and data $y$. Consider pdf $q_{\Bz}$ with partitioned parameters $\Bz = (\Bz_1,\ldots,\Bz_g)$, which can be factorized as
\begin{equation}
 q_{\Bz}(z) = \prod_{i=1}^{g}q_{\Bz_i}(z_i).
\end{equation}
Here the parameters $\Bz_1,\ldots,\Bz_g$ need not be of the same size or one-dimensional. 
The objective is to find such densities $q_{\Bz_i}, i=1,\ldots,g$
that minimize $\KL(q_{\Bz}||p_{\Bz \cond \By})$, the Kullback-Leibler divergence (\cite{KullbackLeibler51}) of density $q_{\Bz}$ with respect to density $p_{\Bz \cond \By}$. This form of variational inference is often called the mean field approximation. 
The optimal pdfs $q_{\Bz_i}^{*}$ can be found by computing the following expectations (see for instance \cite{Fox2012} or \cite[pp. 464 -- 466]{Bishop2006} for derivation)
\begin{equation}
    \ln q_{\Bz_j}^{*}(z_j) = \mean_{\Bz_{i}:i\neq j}[\ln p_{\Bz,\By}(z, y)] + c = \int \ln p_{\Bz,\By}(z, y) \prod_{i\neq j}q_{\Bz_i}(z_i)\,\mathrm{d} z_i + c.
\end{equation}
In the above formula $p_{\Bz,\By}(z, y)$ is the joint distribution of the parameters $\Bz$ and the data $\By$ and $c$ is constant with respect to the current random vector to be solved. The expectation is taken over all variables except the $j$th. The constant is related to the normalisation term and it is unnecessary to be computed since we know that $q_{\Bz_i}^{*}$'s are normalised pdfs. 

The parameters of the distributions $q^{*}_{\Bz_j}$ will usually depend on expectations with respect to other distributions $q^{*}_{\Bz_i}$, $i \neq j$. So the parameters are solved iteratively in practice. That is, one starts with some initial values for the unknown parameters of these pdfs and updates them cyclically using the current estimates for the other densities until some stopping criteria is satisfied.
The algorithm is guaranteed to converge. \cite{Bishop2006}


Next we use VB to approximate the full posterior (\ref{eq:post2}) so that
\begin{equation}
    p_{\Bx,\Bnu,\Blambda,\Br \cond y}(x,\nu,\lambda,r \cond y) \approx
    q_{\Bx} (x)
    q_{\Bnu} (\nu)
    q_{\Blambda} (\lambda)
    q_{\Br} (r).
\end{equation}
%
We will denote the expectation, for example, with respect to a random vectors $\Bnu, \Blambda, \Br$ as $\mean_{\Bnu, \Blambda, \Br}$ when all the other variables are kept fixed. 
A bar over random vector denotes its mean and $c_i$'s denote values that are constants with respect to current variables. These constants are not necessary to be computed. Notice also that matrix $R$ depends on $\Br$. With these conventions it can be calculated that


\begin{tabular}{r c l}
\noalign{\vskip 5mm}   
  $\ln q_{\Bx}^{*}(x)$ &\hspace{-4mm}$=$ &\hspace{-4mm}$\mean_{\Bnu, \Blambda, \Br}\left[\ln p_{\Bx,\Bnu,\Blambda,\Br \cond \By}(x,\Bnu,\Blambda,\Br \cond y)\right] + c_1$  \\
  \noalign{\vskip 1mm}
 &\hspace{-4mm}$=$&\hspace{-4mm}$ \mean_{\Bnu, \Blambda, \Br}\Big[\left(N +\la -1\right)\ln\Blambda + \left(\frac{N}{2} + \nga -1\right)\ln\Bnu   + \left(p-\frac{3}{2}\right)\sum\limits_{i,j,l}^{}\ln \Br_{i,j,l} - \frac{\Bnu}{2}\| y-H x \|^2_2 $ \\  
 \noalign{\vskip 1mm}
 &\hspace{-4mm}&\hspace{-9.5mm} $\quad  - \frac{\Blambda}{2}\| R^{-1} D x \|^2_2  - \frac{a}{2} \sum\limits_{i,j,l}^{} \Br_{i,j,l} - \frac{b}{2} \sum\limits_{i,j,l}^{} \Br_{i,j,l}^{-1} - \lb\Blambda - \ngb\Bnu \Big] + c_1$ \\
 \noalign{\vskip 1mm}
 &\hspace{-4mm}$=$&\hspace{-4mm}$ -\half\bar{\nu}\| y-H x \|^2_2 - \half\bar{\lambda}\mean_{\Br}(\| R^{-1} D x \|^2_2) + c_2$ \\
 \noalign{\vskip 2mm}
 &\hspace{-4mm}$\overset{{(\ref{eq:rdxnorm})}}{=}$&\hspace{-4mm}$ -\half(\bar{\nu}\| y-H x \|^2_2 + \bar{\lambda}\| \bar{R}^{-1} D x \|^2_2) + c_2$ \\
 \noalign{\vskip 1mm}
 &\hspace{-4mm}$\overset{{(\ref{eq:posterior_square_deriv})}}{=}$&\hspace{-4mm}$ -\half(\bar{\nu}(x-\hat{x})^T\bar{Q}(x-\hat{x}) + \bar{\nu}(y^Ty-\hat{x}^T\bar{Q}\hat{x})) + c_2$ \\
 \noalign{\vskip 1mm}
 &\hspace{-4mm}$=$&\hspace{-4mm}$ -\frac{\bar{\nu}}{2}(x-\hat{x})^T\bar{Q}(x-\hat{x}) + c_3,$
\end{tabular}

\noindent
where we have denoted $\hat{x} = \bar{Q}^{-1} H^T y$, $\bar{Q} = H^TH+{\textstyle\frac{\bar{\lambda}}{\bar{\nu}}} D^T\bar{R}^{-2}D$,  
$\bar{R} = \text{diag}\left(\sqrt{2/\mean(\Br^{-1})}\right)$ and thus $\bar{R}^{-2} = \half\text{diag}\left(\mean(\Br^{-1})\right)$ with the convention that the inversions on $\Br$ are done component wise. 
So we obtain
\begin{equation} \label{eq:qx}
\Bx \;\sim\; \Normal(\hat{x}, (\bar{\nu}\bar{Q})^{-1}).
\end{equation}


Next the distribution for $\Bnu$ is derived. We can compute that
\begin{align*}
 \ln q_{\Bnu}^{*}(\nu) &= \mean_{\Bx, \Blambda, \Br}\left[\ln p_{\Bx,\Bnu,\Blambda,\Br \cond \By}(\Bx,\nu,\Blambda,\Br \cond y)\right] + c_1 \\
 &= \left(\frac{N}{2} + \nga -1\right)\ln\nu - \frac{\nu}{2}\mean_{\Bx}(\| y-H \Bx \|^2_2) - \ngb\nu + c_2.
\end{align*}
Using some properties of expectation and trace one can see that  
\begin{align*}
 \mean(\| y-H \Bx \|^2_2) &= \mean(y^Ty-y^TH\Bx-\Bx^TH^Ty + \Bx^TH^TH\Bx) 
 \\ &=  \mean(y^Ty) - \mean(y^TH{\Bx}) - \mean({\Bx}^TH^Ty) + \mean(\tr(\Bx^TH^TH\Bx)) 
 \\ &= y^Ty-y^TH\bar{x} - \bar{x}^TH^Ty  + \mean(\tr(H\Bx\Bx^TH^T)) \\ &= y^Ty-y^TH\bar{x} - \bar{x}^TH^Ty + \tr(H\mean(\Bx\Bx^T)H^T) 
 \\ &= y^Ty-y^TH\bar{x} - \bar{x}^TH^Ty + \tr(H\bar{x}\bar{x}^TH^T) + \tr(H\var(\Bx)H^T) 
 \\ &= y^Ty-y^TH\bar{x} - \bar{x}^TH^Ty + \bar{x}^TH^TH\bar{x} + \tr(\var(\Bx)H^TH) \\ 
 &= \| y-H \bar{x} \|^2_2 + \tr(\var(\Bx) H^TH).
\end{align*}
Thus the distribution for $\Bnu$ is
\begin{equation} \label{eq:qnu}
\Bnu \;\sim\; \Gam\left(\frac{N}{2} + \nga, \half\| y-H \bar{x} \|^2_2 + \half\tr(\var(\Bx) H^TH) + \ngb \right).
\end{equation}


Derivation of the distribution for $\Blambda$ is similar. Since it holds that
\begin{equation}
  \mean_{\Bx,\Br}(\| R^{-1} D \Bx \|^2_2) = \half\sum_{i,j,l}\left(\mean(\Br_{i,j,l}^{-1})\mean((\nabla_{ij}^{l}\Bx)^2)\right),
\end{equation}
which is easily verified using (\ref{eq:rdxnorm}), the result is
\begin{equation} \label{eq:ql}
\Blambda \;\sim\; \Gam\Big(N + \la, \frac{1}{4}\sum_{i,j,l}\left(\mean(\Br_{i,j,l}^{-1})\mean((\nabla_{ij}^{l}\Bx)^2)\right) + \lb \Big). 
\end{equation}
Writing out the square term and using the linearity of expectation shows that 
\begin{equation}
\mean((\nabla_{ij}^{1}\Bx)^2) = \mean((\Bx_{i+1,j}-\Bx_{i,j})^2) = \mean(\Bx_{i+1,j}^2) + \mean(\Bx_{i,j}^2) - 2\mean(\Bx_{i+1,j}\Bx_{i,j}),
\end{equation}
and similarly for horizontal difference when $l=2$. 
Given the mean and covariance matrix of $\Bx$, this statistic can be computed by summing corresponding components of $\mean(\Bx\Bx^T) = \var(\Bx) + \mean(\Bx)\mean(\Bx)^T$. The means for $\Bnu$ and $\Blambda$ can now be computed given the mean and variance of $\Bx$ since the distributions are gamma. For $\Blambda$ and for $\Bx$ one also needs to know the moments $\mean(\Br^{-1})$. 


The components of $\Br$ are mutually independent so $q_{\Br}(r) = \prod_{i,j,l} q_{\Br_{i,j,l}}(r_{i,j,l})$. 
For each of the components $\Br_{i,j,l}$ we obtain a GIG density. The derivation goes as follows.
\begin{align*}
 \ln q_{\Br_{i,j,l}}^{*}(r_{i,j,l}) &= \mean_{\Bx, \Bnu, \Blambda, \Br_{-[i,j,l]}}[\ln p_{\Bx,\Bnu,\Blambda,\Br \cond \By}(\Bx,\Bnu,\Blambda,\Br \cond y)] + c_1 \\
 &= \left(p-\frac{3}{2}\right)\ln r_{i,j,l} -\frac{\bar{\lambda}}{2} \mean_{\Bx,\Br_{-[i,j,l]}}\Big(\sum_{i,j,l}^{}\frac{(\nabla_{ij}^{l}\Bx)^2}{2r_{i,j,l}}\Big) - \half a r_{i,j,l} - \half b r_{i,j,l}^{-1} + c_2 \\
 &= \left(p-\frac{3}{2}\right)\ln r_{i,j,l} -\frac{1}{2r_{i,j,l}}\Big( \frac{\bar{\lambda}}{2}\mean_{\Bx}((\nabla_{ij}^{l}\Bx)^2) + b\Big ) - \half a r_{i,j,l} + c_3 .
\end{align*}
From above it is seen that
\begin{align} \label{eq:qrgig}
\Br_{i,j,l} \;\sim\; \GIG\left(a, \frac{\bar{\lambda}}{2}\mean((\nabla_{ij}^{l}\Bx)^2) + b, p - \half \right), 
\end{align}
for $i=1,\ldots,k, \, j=1,\ldots,n, \,  l=1,2$. 
In the anisotropic TV case, GIG reverts to RIG as in the case of conditional densities and we obtain
\begin{equation} \label{eq:qr}
\Br_{i,j,l} \;\sim\; \RIG\left(\sqrt{\bar{\lambda}\mean((\nabla_{ij}^{l}\Bx)^2)}, \half\bar{\lambda}\mean((\nabla_{ij}^{l}\Bx)^2)\right). 
\end{equation}
Given the parameters, the moments $\mean(\Br_{i,j,l}^{-1})$ in RIG case can be computed from the following formula
\begin{equation} \label{eq:irmoment}
\mean(\Br_{i,j,l}^{-1}) = \frac{2}{\sqrt{\bar{\lambda}\mean((\nabla_{ij}^{l}\Bx)^2)}}.
\end{equation}
A formula for this moment in the case of equation (\ref{eq:qrgig}) can be obtained using result (\ref{eq:gig_moments}) in Appendix \ref{sec:appendix1}.

Starting with some initial values for the unknown parameters of the probability densities above and updating them one at a time using the latest estimates, one finds the optimal distributions for the unknowns $\Bx, \Bnu, \Blambda$ and $\Br$. As a result one can extract the mean of $\Bx$ (which is the same as the mode in this normal distribution case). Furthermore, one can, for example, plot the (marginal) densities of $\Blambda$ and $\Bnu$ to analyze the inference result. The resulting algorithm is presented as Algorithm \ref{alg:vbtv} below. 

\begin{algorithm} 
\DontPrintSemicolon
set initial values of parameters of densities $q^0_{\Br_{i,j,l}}(r_{i,j,l})$, $q^0_{\Bnu}(\nu)$ and $q^0_{\Blambda}(\lambda)$ \;
\For{$s=1,2,\ldots$ until stopping criteria is met}{
// use the newest values of the parameters at each step: \;
solve for parameters of $q^s_{\Bx}(x)$ using (\ref{eq:irmoment}) and (\ref{eq:qx}) \; 
solve for parameters of $q^s_{\Bnu}(\nu)$ using (\ref{eq:qnu}) \;
solve for parameters of $q^s_{\Blambda}(\lambda)$ using (\ref{eq:ql}) \;
\For{all elements in $\{(i,j,l) \,|\, i=1,\ldots,k, \, j=1,\ldots,n, \,  l=1,2 \}$}{
  solve for parameters of $q^s_{\Br_{i,j,l}}(r_{i,j,l})$ using (\ref{eq:qr}) \;
}}
return the parameters of $q^\text{end}_{\Bx}(x)$, $q^\text{end}_{\Bnu}(\nu)$, $q^\text{end}_{\Blambda}(\lambda)$ and $q^{\text{end}}_{\Br_{i,j,l}}(r_{i,j,l})$
\caption{VB algorithm for TV regularisation in 2d.} \label{alg:vbtv}
\end{algorithm}

\subsection{Two-dimensional Laplace TV prior model}

The approach of using Gaussian scale mixtures does not apply for the isotropic TV but we will briefly consider an another related prior. Previously we considered the differences of the neighbouring pixels to be modelled using one-dimensional Laplace densities which was connected to the anisotropic TV. However, next we will briefly discuss the possibility of using bivariate Laplace density. As mentioned earlier, the multivariate Laplace is a scale mixture of the multivariate normal distribution. If $\Sigma = \frac{2r}{\lambda}\Id$, the result can be written so that if $\Bx \cond (\Br = r, \Blambda = \lambda) \sim \Normal(0, \frac{2r}{\lambda}\Id)$ and $\Br \sim \Exp(1)$ then $\Bx \cond (\Blambda = \lambda) \sim \ML(0, \frac{2}{\lambda} \Id)$.
The pdf of this bivariate Laplace distribution is 
\begin{equation}
p_{\Bx\cond\Blambda}(x\cond\lambda) = \frac{\lambda}{2\pi}\Besselk_{0}\left(\sqrt{\lambda}\sqrt{x_1^2+x_2^2}\right) ,
\end{equation} 
where $x = [x_1,x_2]^T \in \mathbb{R}^2$ and function $\Besselk_p$ is the modified Bessel function of the second kind with parameter $p \in \mathbb{R}$. In contrast to the one-dimensional Laplace density, the bivariate Laplace density has a singularity at the origin. 

This connection encourages to study the following two-dimensional TV prior 
\begin{equation}   \label{eq:2dTVwBesselk}
      p_{\Bx\cond \Blambda }(x\cond \lambda)\propto \lambda^{N}
      \prod_{i=1}^k \prod_{j=1}^n \Besselk_0\left(\sqrt{\lambda}\sqrt{(x_{i,j+1}-x_{i,j})^2 + (x_{i+1,j}-x_{i,j})^2}\right).
\end{equation}
The periodic boundary conditions $x_{1,j} = x_{k+1,j}$ and $x_{i,1} = x_{i,n+1}$ are applied as previously. 
Using the same idea as in the case of anisotropic TV, in the place of the prior (\ref{eq:2dTVwBesselk}), that has somewhat difficult formula, one can use the hierarchical prior
\begin{equation}   
p_{\Bx\cond \Blambda,\Br }(x\cond \lambda,r) \propto 
   \lambda^{N} \prod_{i=1}^k \prod_{j=1}^n r_{i,j}^{-1} 
      \,\e^{ -\frac{\lambda}{2}\sum\limits_{i=1}^{k} \sum\limits_{j=1}^{n} \frac{\left(\nabla_{ij}^{1}x\right)^2 + \left(\nabla_{ij}^{2}x\right)^2}{2r_{i,j}} },
\end{equation}
where $\Br_{i,j}$ are additional hyperparameters and $\Br_{i,j} \sim \Exp(1)$. We can again generalise this approach by employing $\GIG(a,b,p)$ mixing density in the place of $\Exp(1)$. The derivation of the posterior, conditional densities and the IAS and VB equations goes in similar fashion as in the case of anisotropic TV. In fact, one can see that the same conditional densities are obtained except that matrix $R$ has changed so that in this case $R = \diag([\sqrt{2r^T}, \sqrt{2r^T}]^T) \in \mathbb{R}^{2N\times 2N}$ and 
\begin{align}
\Br_{i,j}  \cond x,\nu,\lambda, r_{-[i,j]}, y & \;\sim \;
      \GIG\left(a, \textstyle\frac{1}{2}\lambda\big((\nabla_{ij}^{1}x)^2 + (\nabla_{ij}^{2}x)^2\big), p-1 \right) \nonumber
\end{align} 
for $i=1,\ldots,k, \; j=1,\ldots,n$. 
Calculating the mode of this conditional density gives the IAS formula for the hyperparameters $\Br$. The rest of the IAS formulas are the same as in the anisotropic TV case. VB formulas are also easily obtainable so we will not state them here. This prior model could be seen as a sort of hierarchical model of isotropic TV.

\section{Some technical details and remarks} \label{sec:extensions} 

In this section we discuss some technical details and mention some remarks regarding the model. 
There is one issue with the IAS algorithm: if some adjacent pixels become almost the same for some index $(i,j,l)$, then the corresponding latent variable $r_{i,j,l}$ will become very small. 
This issue makes solving the linear system (\ref{eq:ias_lin_system}) for $x$ numerically difficult as infinite values must be handled. One can use the matrix inversion lemma to deal with the issue. However, for this to work, it is required that $T=D^T{R}^{-2}D$ is invertible which is clearly true in the Lasso case since $D=\Id$ and thus $T$ is diagonal. See e.g.~\cite{Figueiredo2003} for some details. In general case one can replace the exponential hyperprior with some approximation to it like $\GIG(2,0.001,1)$ which will ensure that the values for $r$ cannot go exactly to zero in the equation (\ref{eq:coordr}).
Of course, then the prior is no more exactly related to the Laplace prior but to some other heavy-tailed approximation to it that has less sharp peak at the origin. This approximation is similar to the approximation $|x| \approx \sqrt{x^2 + \beta^2}$ for small positive $\beta$ (see, e.g. \cite{Vogel2002}) which is often used to make the TV functional differentiable at the origin. This approximation allows one to use gradient-based optimisation methods to tackle the problem. Alternatively, it might be possible to make the latent variables to be thresholded to keep them above some small positive value but we did not consider this idea.

Setting $\GIG(0,w,-\frac{w}{2}) = \InvGam(\frac{w}{2},\frac{w}{2})$ as the mixing density leads to a hierarchical Student's t-distribution prior model. In this model one needs to specify the degree of freedom $w$ for the prior. Since $a=0$ we see that the resulting densities for hyperparameters are also inverse gamma densities with a mode that does not present singularity issues. With small values of $w$ one could expect stronger penalisation for smoothness. With large values of $w$ the results will be smoother as the t-distribution converges to Gaussian as $w \rightarrow \infty$.

Similar derivations as in the case of the anisotropic TV can easily be done in one-dimensional setting. 
In addition, as a special case, replacing matrix $D$ with identity matrix and the differences in the sums that are written out with single components, gives a hierarchical Bayesian model for the Lasso. Similar hierarchical Lasso models have been considered e.g. in \cite{Figueiredo2003, Park2008, Kyung2010} mainly for regression problems.


Inverting the covariance matrix of $\Bx$ is required in the VB algorithm. This limits the use of VB only to problems with small dimension. Perhaps the matrix inversions could be done faster by assuming bccb (block circulant with circulant blocks) structure for covariance matrix and inverting it in Fourier domain as used in \cite{Babacan2007} and using certain approximations. However, even if that could be somehow done (without large approximation errors), this method does not work for general matrix. In \cite{Calvetti2008b} Region of Interest (ROI) method was introduced for one to be able to compute the CM estimate and assess uncertainty of the result in a small (intersting!) domain of the whole image. We did not consider this approach in our case but it might be possible to use similar method in out VB case.

In the case of the MAP estimate only a linear system has to be computed which is evidently faster and allows computing the MAP estimate in much larger dimensions. It might be possible to do it in Fourier domain but, even if the matrix does not have any nice structure to exploit, iterative techniques such as the conjugate gradient method with preconditioning can be used \cite[Ch. 5]{Vogel2002} to speedup the computation. A drawback compared to Tikhonov regularisation formula is that one needs to solve this linear system as many times as iteration steps are needed for convergence. We also note that the iterative methods for solving the linear system developed in \cite{Calvetti2008b} (and in some of the preceding papers by the authors of \cite{Calvetti2008b}) could be used in our case. 

The similarity between the conditional densities, IAS equations and the independent densities of VB method cannot be ignored. For instance, in VB case the means and certain moments with respect to other independent densities appear while in the corresponding conditional densities of the Gibbs sampler one uses samples drawn from the other corresponding conditional densities. Futhermore, in IAS algorithm the modes of the conditional densities are used. The expectation maximisation (EM) method has a variant called expectation conditional maximisation (ECM) \cite{Ecm1993} which could also have been used and would have produced quite similar formulas as the IAS (or VB) method. In EM based approaches usually the hyperparameters are considered as latent variables. In ECM the maximisation step is done in the style of IAS by maximising the log-likelihood with respect to each variable. Unlike in standard EM it is enough that the value of the objective function increases but the exact maximum is not needed to be solved. EM algorithm has been applied for the Lasso case in \cite{Figueiredo2003}.

\section{Examples of image deblurring} \label{sec:examples}

In this section we demonstrate the algorithms presented in earlier sections with one and two-dimensional image deblurring test problems. There are several priors (obtainable by setting different GIG mixing densities) that can be used in our model for the following test problems. However, for the sake of brevity we only show results for priors that are hierarchical representations of Laplace and t-distributions. We will not discuss the differences between the priors in detail here but focus more on the general aspects. 

Non-informative priors for $\Blambda$ and $\Bnu$ were used in the test problems although it could be possible to set them according to prior knowledge e.g.~about the noise level in the images. In the IAS algorithm in the case of Laplace type of prior we used $\GIG(2,0.001,1)$ mixing density to avoid the singularity issues as discussed in Section \ref{sec:extensions}. The degree of freedom in the case of the t-distribution priors was set to 2 in all of our examples. All the test images were convolved with Gaussian blurring kernel and then white Gaussian noise was added to the blurred test images. The results were computed with \textsc{Matlab} 2011b. 


We start with simple one-dimensional examples. 
Some reconstructions using an implementation based on our hierarchical model are presented in Figures \ref{fig:blur1d_demo1b} and \ref{fig:blur1d_demo2}. 
In these examples the signal was defined on the interval $[0,1]$ and it was divided into equispaced $100$ points. For comparison, we used Tikhonov regularisation with penalty term $J(x)=\delta\|Dx\|_2^2$ and a deterministic TV regularisation algorithm that is based on the well-known method of transforming the problem into a quadratic programming problem that is easier to solve. 


In two-dimensional image deblurring examples we used a small $42 \times 42$ pixel test image and a moderate size $200 \times 200$ Shepp-Logan phantom image which were blurred using $7 \times 7$ Gaussian blur mask. Some reconstructions of these images are shown in Figures \ref{fig:blur2d_demo} and \ref{fig:blur2d_phantom_demo}.

In these test cases the introduced hierarchical model provides promising estimation results. To our eye the reconstruction qualities are mostly equal or only slightly worse than those obtained by the deterministic optimisation algorithm whose regularisation parameters were tuned 'by eye' for the best reconstrution performance. 
We find that there is little difference between CM estimates computed by Gibbs sampling and VB. This indicates that the VB approximation is accurate. We note, however, that even though the  mean values seem to agree quite well, it can be that the densities are still different. In fact, VB seems to give slightly better image reconstructions than the Gibbs sampler algorithm although neither of them is not well suited for reconstructing sharp edges of the images. The Gibbs sampling was performed using the conditional densities that we derived in Section \ref{sec:model}.
In the Gibbs sampler $10000$ samples were generated from the posterior. 
The CM estimates were not much better than Tikhonov regularisation, though in our examples with less noise or smaller blur levels it performed better (not shown). 

The MAP estimate worked better for blocky images due to its 'sparsity' property than the CM estimate which tends to produce smoother solutions and does not preserve the edges very well with higher blurring or noise levels. The MAP estimate, on the other hand, produced a 'staircasing effect' with the smooth curve in the first example indicating that the regularisation parameter was estimated to be perhaps too large for this case. 
The t-distribution TV prior models produced results with surprisingly well preserved edges although the density is not sharp peaked like Laplace density. With larger values the reconstructions tended to become smoother because then the prior approaches Gaussian prior. With small degree of freedom levels t-distribution prior favoured indeed very 'sparse' solutions and performed thus well in the case of blocky image. 

The IAS and VB algorithms converged reasonably fast in these tests. Usually less than 50 iterations were needed for the convergence. The needed iterations (and the reconstruction performance) expectedly depend on how small changes in the estimated image were used as a stopping criteria for the algorithms. 

In small dimensional cases we solved the linear systems in our equations directly but in large dimensional cases (e.g.~the $200 \times 200$ Shepp-Logan phantom image) we had to use iterative strategy i.e.~preconditioned conjugate gradient method to speedup the computations. It was also noticed that if the initial values were set very badly or if extreme amount of noise was added to the images then the algorithms converged to unwanted solutions like a blank image (as the regularisation parameter converged close to infinity) or to the original blurred and noisy image (as the regularisation parameter converged close to zero). However, in most cases there were no problems with the convergence of the algorithms in the case of several deblurring test examples. 

We also tested the models with certain image denoising test problems. This was done by simply setting the matrix $H$ to be the identity matrix. The results of these problems were mostly not good and issues with the convergence of the algorithms were encountered. The estimates often converged to either blank image or to the original noisy image. The results of these examples are not illustrated here. The convergence issues might be because the posterior could be improper and the computation using the IAS algorithm fails. If this is the case then the results by the Gibbs sampling or VB might also be somewhat questionable. Setting tight priors for $\lambda$ and $\nu$ parameters allows convergence to expected results (and ensures properness of the posterior) but then one faces the problem of choosing these values which is exactly what we wanted to avoid by studying the hierarchical model in this paper. It is also likely the case that the posterior is dominated by the prior of the parameters $\lambda$ and $\nu$. This makes choosing the prior very crucial but generally one does not have enough prior information (similarly as one does not know how large the regularisation parameter should be set in the deterministic framework). Also, generally, if the posterior is multimodal, then it is possible that the IAS algorithm converges to some local maxima. 
Thus it seems that our method is partly unsatisfactory. 
However, in the case of image deblurring, the improper priors were good choice and in most test cases, the results were quite good and no tuning were needed. Anyway, we hope to conduct more tests on the convergence properties of the algorithms in near future.

\section{Conclusions} \label{sec:concl}

In this work total variation regularisation was studied in the Bayesian context. The usual deterministic optimisation algorithms require auxiliary methods for determining regularisation parameter and produce only point estimates. In the model proposed here, the total variation penalty function was formulated as a Laplace prior distribution and the posterior for the model was derived exploiting the Gaussian scale mixture property. The formulation is straightforward to generalise to other heavy-tailed `TV like' priors that promote sparsity of the estimated images and all the essential parameters in the model are simultaneously estimated from the data. The uncertainty of the results can be (at least in principle) assessed. 
Algorithms for the conditional mean and maximum a posteriori estimates were derived using the variational Bayes and IAS methods. 

When compared to other more or less similar approaches in the literature, our model is more flexible and slightly more general. Although we did not test the differences between the different TV priors exhaustively it seems that the Laplace TV prior and the t-distribution TV prior with a small degree of freedom are good choices in cases where preserving the edges of the image is essential. We also proposed the two-dimensional Laplace TV prior that can be considered a statistical alternative for isotropic TV prior. 

Generally the MAP estimates worked well in our deblurring test cases for restoring blocky images and the edges were well preserved. 
Comparison of the MAP and (approximative) CM estimates showed that the CM tends to yield more smooth and less 'edge preserving' image reconstructions than the MAP estimate. 
The CM estimates computed using VB were very close to those results obtained using Gibbs sampling. Unfortunately computing the (approximate) CM estimates is computationally intensive because of the need to invert large matrices in the VB algorithm. In \cite{Calvetti2008b} Region of interest (ROI) method was developed because of the similar problems. It might be possible to apply the same approach in our VB case also. The MAP estimate, on the other hand, requires only solving a linear system for which special iterative techniques can be used. 

Our model in which all the parameters are estimated from data is not fully satisfactory. Although realistic estimates could be obtained in the case of deblurring problems, difficulties with the estimation were encountered in some cases as the algorithms did not converge to proper solutions. This might be because the posterior can become improper if using improper priors or even if is not, it is likely the case that the prior for the hyperparameters dominates the likelihood. One can, of course, set tight priors for the parameters or simply consider them to be fixed (which is the same as choosing Dirac delta priors) but the need to do this is against the motivation of this study. If the $\nu$ and $\lambda$ parameters are fixed, then our model simplifies to a model considered in \cite{Calvetti2008b} (although there are some other differences). It might be possible to analyse these properties of the model both analytically and numerically in more detailed way in future. 

As future work also more comprehensive study of hierarchical models could be carried out and one could implement and test the algorithms with more specific imaging (or other) problems. 
Although the proposed method does not require separate tuning of the regularisation parameter in the deblurring case in typical scenarios, the method does include higher-level parameters (the GIG distribution's parameters) that could be tuned to give good results for a particular problem class.
Finally, in the model we assumed Gaussian noise. However, by using Gaussian scale mixture trick it is also possible to generalise the method so that heavy-tailed noise of the image is modeled.

\bibliography{article_arxiv}
\bibliographystyle{plain}

\appendix
\section{Probability distributions} \label{sec:appendix1}

The probability densities used in this paper and related facts are summarised here. 
We start by the generalised inverse Gaussian (GIG) distribution \cite{Jorgensen1982}. 
\begin{definition}\label{gigdef}
A random variable $\Bx > 0$ has GIG distribution, denoted $\Bx \sim \GIG(a,b,p)$, with parameters $a, b$ and $p$ if it has the pdf
\begin{align}
  p_{\Bx }(x) = \frac{(a/b)^{p/2}}{2K_p(\sqrt{ab})} x^{p-1}\e^{-\frac{1}{2}(ax+\frac{b}{x})}, \quad x > 0,
\end{align}
where $K_p$ is the modified Bessel function of the second kind with parameter $p \in \mathbb{R}$. The range of the parameters is
\begin{align}\label{eq:k_defarea}
 a > 0, \; b \geq 0, \; p > 0; \quad a > 0, \; b > 0, \; p = 0; \quad a \geq 0, \; b > 0, \; p < 0.
\end{align}
\end{definition}

The GIG distribution is unimodal and skewed. The central moments, mode and variance for GIG can be computed using formulas \cite[pp. 7, 13--14]{Jorgensen1982} 
%
%
\begin{align}
\mean (\Bx^q) &= \left(\frac{b}{a}\right)^{q/2} \frac{\Besselk_{p+q}(\sqrt{ab})}{\Besselk_{p}(\sqrt{ab})}, \quad q \in \mathbb{R}, \label{eq:gig_moments} \\
\mode(\Bx) &= 
\begin{cases}
\frac{(p-1) + \sqrt{(p-1)^2 + ab}}{a}, & \text{if } a > 0, \\ 
\frac{b}{2(1-p)}, & \text{if } a = 0 ,
\end{cases} \\
\var (\Bx) &= \frac{b}{a} \left( \frac{\Besselk_{p+2}(\sqrt{ab})}{\Besselk_{p}(\sqrt{ab})} - 
   \left(\frac{\Besselk_{p+1}(\sqrt{ab})}{\Besselk_{p}(\sqrt{ab})}\right)^{2} \right), \label{eq:gig_var}
\end{align}
which can be further simplified if $a=0$ or $b=0$ using certain asymptotic property of the modified Bessel function. These formulas are given in \cite[pp. 13--14]{Jorgensen1982}. 

Reciprocal Inverse Gaussian (RIG) is a special case of GIG. Setting 
$a=\alpha^2/\beta, b=\beta$ and $p=1/2$ gives RIG. Also gamma and inverse gamma densities are special cases of GIG which follow by setting $a=2\beta, b=0$ and $p=\alpha$ or $a=0, b=2\beta$ and $p=-\alpha$, respectively. Exponential distribution $\Exp(\theta)$ is the same as $\Gam(1,\theta)$. These densities and some of their statistics  are gathered in Tables \ref{table:gig} and \ref{table:gigg}. Note that $\Gamma(\cdot)$ is the gamma function and is defined as $\Gamma(x) = \int_0^{\infty}t^{x-1}\e^{-t} \mathrm{d} t$ for $x>0$. RIG density has also been studied in \cite{Tweedie1956}.



We define the multivariate Laplace distribution in the following way.
\begin{definition}
A random $n$-vector $\Bx$ is said to have a multivariate Laplace distribution, denoted as $\Bx \sim \ML(\mu,\Sigma)$, with parameters $\mu$ and a $n\times n$ positive definite matrix $\Sigma$, if it has the pdf
\begin{align}
   p_{\Bx}(x) = \frac{2}{(2\pi)^{n/2} (\det(\Sigma))^{1/2}} \frac{\Besselk_{\frac{n}{2}-1}\left(\sqrt{2(x-\mu)^T\Sigma^{-1}(x-\mu)}\right)}{\left(\sqrt{\half(x-\mu)^T\Sigma^{-1}(x-\mu)}\right)^{\frac{n}{2}-1}}.
\end{align}
\end{definition}
This definition agrees with the one in \cite[p. 235]{Kotz2001} but with the added location parameter $\mu$. 
%
In one dimension the Laplace pdf with $\Sigma = 2/b^2 \in \mathbb{R}_{+}$ and $b>0$ reduces to 
\begin{align}
 p_{\Bx}(x) = \frac{b}{2}\e^{-b |x-\mu|}.
\end{align}
This one-dimensional Laplace density is denoted as $\Laplace(\mu, b)$. This is easily seen by some simple calculations and using a property of the Bessel function. The second parameter was chosen in this specific way for convenience. 
The density is sometimes also called the double-exponential density as it consists of two exponential curves.

The mean of this density is $\mu$ and variance is $\Sigma$, which can be seen using the following connection to Gaussian and exponential density. 
The multivariate Laplace distribution defined above is a Gaussian scale mixture with exponential mixing density. That is, it can be written as 
\begin{align}\label{ml_gsm}
\By = \mu + \sqrt{\Br}\Sigma^{1/2}\Bx,
\end{align} 
where $\Br \sim \Exp(1)$, $\Bx \sim \Normal(0,\Id)$ and $\Br$ and $\Bx$ are independent. The square root is defined as being any matrix such that $\Sigma = (\Sigma^{1/2})(\Sigma^{1/2})^T$. The fact that Laplace density can be represented as GSM in one-dimensional case was discovered by Andrews and Mallows in 1974 \cite{Andrews1974}. 
The next theorem is the generalisation to multidimensional case inspired by \cite{Eltoft2006}, where a slightly different version of the result below is given.


\begin{theorem}[Laplace as Gaussian scale mixture] \label{thm:lgsm}
If $\By \cond (\Br=r) \sim \Normal(\mu, r \Sigma)$ and $\Br \sim \Exp(1)$, then $\By \sim \ML(\mu, \Sigma)$.
\end{theorem}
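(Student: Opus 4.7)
The plan is to compute the marginal density of $\By$ directly by integrating out the mixing variable $\Br$, and then match the result to the definition of the multivariate Laplace pdf. This is a self-contained verification, so the approach is essentially a careful calculation organised around a standard integral representation of the modified Bessel function $K_p$.

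First, I would write
\begin{equation*}
p_{\By}(y) = \int_0^\infty p_{\By\cond\Br}(y\cond r)\, p_{\Br}(r)\,\ud r
= \int_0^\infty \frac{\e^{-r}}{(2\pi r)^{n/2}(\det\Sigma)^{1/2}}\,\exp\!\left(-\frac{1}{2r}(y-\mu)^{T}\Sigma^{-1}(y-\mu)\right)\ud r.
\end{equation*}
Let $Q := (y-\mu)^{T}\Sigma^{-1}(y-\mu) \ge 0$. Pulling constants out of the integral reduces the problem to evaluating
\begin{equation*}
I := \int_0^\infty r^{-n/2}\,\exp\!\left(-r - \frac{Q}{2r}\right)\ud r.
\end{equation*}

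Next I would invoke the standard integral representation
\begin{equation*}
\int_0^\infty t^{p-1}\exp\!\left(-at - \frac{b}{t}\right)\ud t \;=\; 2\left(\frac{b}{a}\right)^{p/2} \Besselk_{p}\!\left(2\sqrt{ab}\right),
\qquad a,b>0,
\end{equation*}
which is the usual defining integral for $K_p$ (and for $b=0$ the appropriate limit/extension). Applying it with $a=1$, $b=Q/2$, and $p = 1-n/2$, and using the symmetry $\Besselk_{-p}=\Besselk_p$ to rewrite $\Besselk_{1-n/2}=\Besselk_{n/2-1}$, gives
\begin{equation*}
I = 2\left(\frac{Q}{2}\right)^{\!\tfrac{1}{2}-\tfrac{n}{4}}\Besselk_{n/2-1}\!\left(\sqrt{2Q}\right).
\end{equation*}

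Finally, I would reassemble the constants and verify that the result matches the pdf in the definition of $\ML(\mu,\Sigma)$. Specifically, substituting $I$ back gives
\begin{equation*}
p_{\By}(y) = \frac{2}{(2\pi)^{n/2}(\det\Sigma)^{1/2}}\,
\frac{\Besselk_{n/2-1}\!\left(\sqrt{2(y-\mu)^{T}\Sigma^{-1}(y-\mu)}\right)}
{\left(\sqrt{\tfrac{1}{2}(y-\mu)^{T}\Sigma^{-1}(y-\mu)}\right)^{n/2-1}},
\end{equation*}
which is exactly the density of $\ML(\mu,\Sigma)$. The only real obstacle is getting the bookkeeping of the $K_p$ integral right, in particular matching the exponent $1-n/2$ with the index $n/2-1$ via $\Besselk_{-p}=\Besselk_p$ and checking that the prefactor $(Q/2)^{1/2-n/4}$ corresponds to $1/(\sqrt{Q/2})^{n/2-1}$; this is routine but is where sign/factor mistakes would most naturally occur. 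The boundary case $Q=0$ (i.e.\ $y=\mu$) can be handled either by noting that both sides diverge in the same way via the small-argument asymptotics of $\Besselk_{n/2-1}$, or by appealing directly to the GSM representation~\eqref{ml_gsm}.
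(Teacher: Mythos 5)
Your proposal is correct and follows essentially the same route as the paper: both marginalise out $\Br$ and evaluate the resulting integral via the Bessel-function identity (the paper phrases this as recognising an unnormalised $\GIG(2,\,z^T\Sigma^{-1}z,\,1-\tfrac{n}{2})$ density, which is the same integral representation of $\Besselk_p$ you invoke directly). The bookkeeping with $\Besselk_{-p}=\Besselk_p$ and the prefactor checks out, and your remark about the singular case $y=\mu$ is a harmless measure-zero point that the paper does not bother to address.
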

\begin{proof}
The proof is straightforward calculation. 
\begin{align*}
p_{\By}(y) &= \int_0^{\infty}p_{\By,\Br}(y,r) \mathrm{d} r = \int_0^{\infty}p_{\By\cond \Br}(y\cond r) p_{\Br}(r) \mathrm{d} r \\
&= \int_0^{\infty} \frac{1}{(2\pi)^{n/2}(\det(r \Sigma))^{1/2}} \e^{-\frac{1}{2r}z^T\Sigma^{-1}z} \e^{-r} \mathrm{d} r \\
&= \frac{1}{(2\pi)^{n/2}(\det(\Sigma))^{1/2}} \underbrace{\int_0^{\infty} r^{-n/2} \e^{-\half\left(2r + \frac{z^T\Sigma^{-1}z}{r}\right)} \mathrm{d} r}_{ = 2\Besselk_{1-\frac{n}{2}}\left(\sqrt{2z^T\Sigma^{-1}z}\right) (\half z^T\Sigma^{-1}z)^{\half\left(1-\frac{n}{2}\right)}} \\
&= \frac{2}{(2\pi)^{n/2} (\det(\Sigma))^{1/2}} \frac{\Besselk_{\frac{n}{2}-1}\left(\sqrt{2z^T\Sigma^{-1}z}\right)}{\left(\sqrt{\half z^T\Sigma^{-1}z}\right)^{\frac{n}{2}-1}},
\end{align*}
where we have denoted $z=y-\mu$. The integrand on the third line is recognised as unnormalised $\GIG(2, z^T\Sigma^{-1}z, 1-\frac{n}{2})$ pdf and is thus computed using the fact that the density integrates to $1$.
\end{proof}

Similarly it can be shown that the multivariate t-distribution can be characterised as a GSM using the connection $\By = \mu + \sqrt{\Br}\Sigma^{1/2}\Bx$ with inverse gamma mixing density $\Br \sim \InvGam(\frac{w}{2}, \frac{w}{2})$, where $w$ is the degree of freedom for t-distribution.

\newpage

\begin{table}
\caption{Special cases of GIG distribution. All the parameters appearing in the formulas must be positive.} 
\begin{center}
 {\begin{tabular}{c c}
 \hline
 \noalign{\vskip .6mm}
 $\Bx \sim$  & $p_{\Bx}(x)$  \\ \hline
 \noalign{\vskip 1mm}    
 \noalign{\vskip 1mm} 
 $\RIG(\alpha,\beta)$ & $\frac{\alpha}{\sqrt{2\pi\beta}} \e^{2\alpha} x^{-\frac{1}{2}} \exp\left({-\frac{(\alpha x+\beta)^2}{2\beta x} }\right)$ \\
 $\Exp(\theta)$ & $\theta \e^{-\theta x}$ \\
 \noalign{\vskip 1mm}
 $\Gam(\alpha,\beta)$ & $\frac{\beta^{\alpha}}{\Gamma(\alpha)}x^{\alpha-1} \e^{-\beta x}$ \\ 
 \noalign{\vskip 1mm}
 $\InvGam(\alpha,\beta)$ & $\frac{\beta^{\alpha}}{\Gamma(\alpha)}x^{-\alpha-1} \e^{-\beta /x}$ \\ \noalign{\vskip 1mm} \hline
 \end{tabular}}
\end{center}
\label{table:gig} 
\end{table}

\newpage

\begin{table}
\caption{Some statistics of different distributions.} 
\begin{center}
 {\begin{tabular}{c c c c}
 \hline
 \noalign{\vskip .6mm}
 $\Bx \sim$ & $\mean(\Bx)$ & $\mode(\Bx)$ & $\var(\Bx)$  \\ \hline
 $\RIG(\alpha,\beta)$ & $\frac{\beta(1+\alpha)}{\alpha^2}$ &  $\frac{-\beta + \beta\sqrt{1+4\alpha^2}}{2\alpha^2}$ & use (\ref{eq:gig_var}) \\
 \noalign{\vskip 1mm}
 $\Exp(\theta)$ & $\theta^{-1}$ & $0$ & $\theta^{-2}$ \\ 
 \noalign{\vskip 1mm}
 $\Gam(\alpha,\beta)$ & $\frac{\alpha}{\beta}$ & $\frac{\alpha-1}{\beta}$ for $\alpha > 1$ & $\frac{\alpha}{\beta^{2}}$ \\
 \noalign{\vskip 1mm}
 $\InvGam(\alpha,\beta)$ & $\frac{\beta}{\alpha-1}$ for $\alpha > 1$ & $\frac{\beta}{\alpha+1}$, & $\frac{\beta^2}{(\alpha-1)^{2}(\alpha-2)}$ for $\alpha > 2$ \\ \noalign{\vskip 1mm} \hline
 \end{tabular}}
\end{center}
\label{table:gigg} 
\end{table}

\newpage

\begin{figure}[t]
\begin{center}
\resizebox{.32\textwidth}{!}{\includegraphics{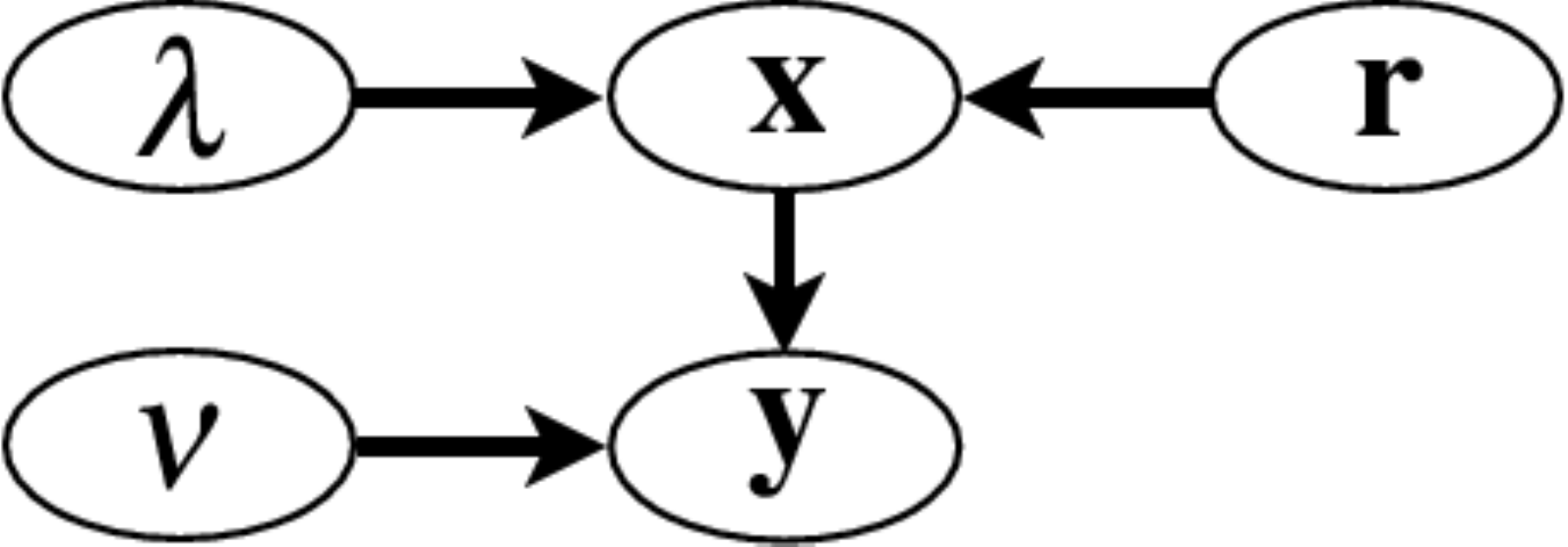}}
\caption{Graphical model of TV regularisation.
\label{fig:dag2}}
\end{center}
\end{figure}

\begin{figure}
\begin{center}
\resizebox{.6\textwidth}{!}{\includegraphics{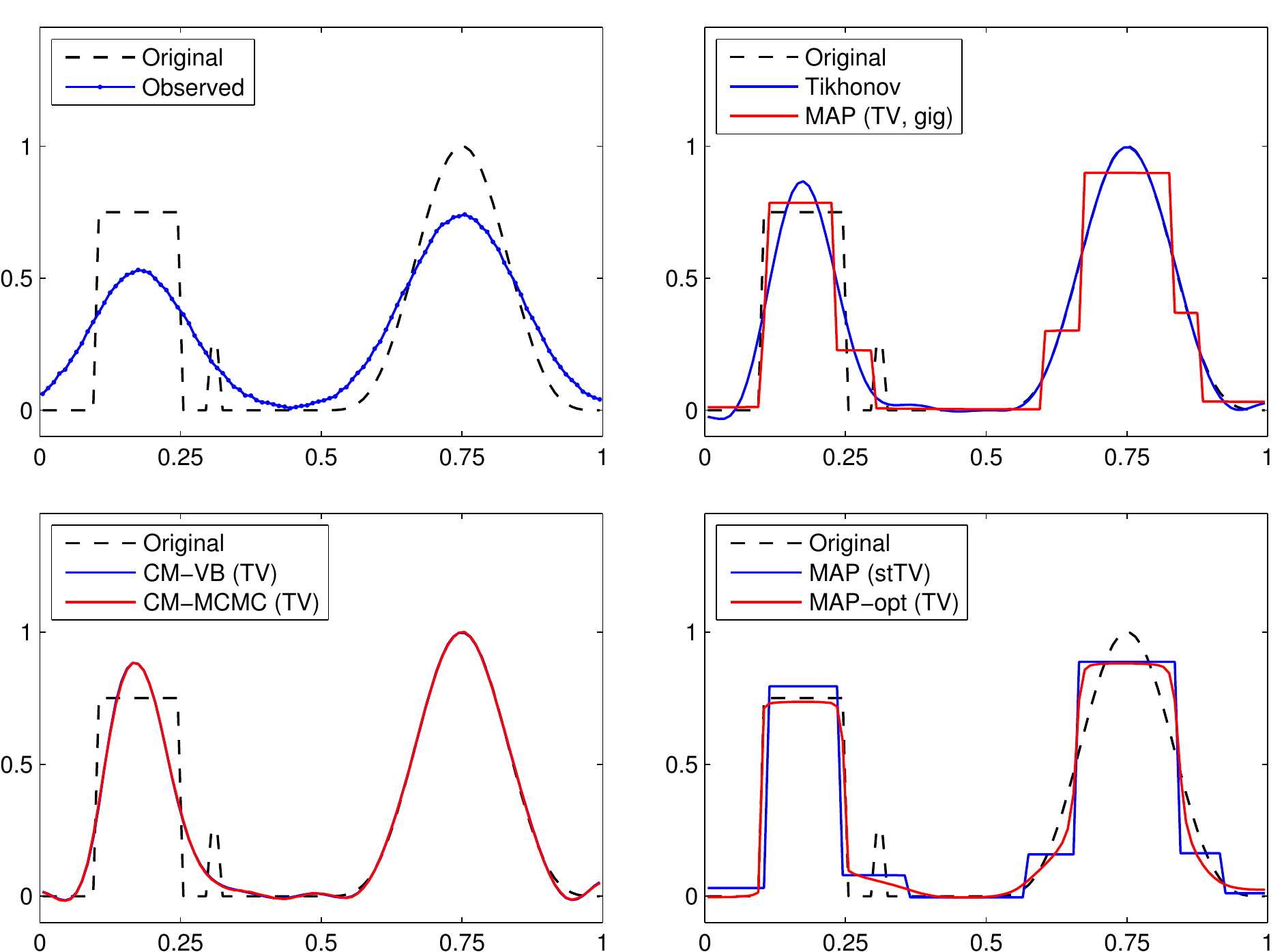}}
\caption{\label{fig:blur1d_demo1b} TV regularisation of one-dimensional partly blocky, partly smooth test image. The noise level was BSNR$=$40 dB. 
CM-VB and CM-MCMC are the CM estimates using VB and Gibbs sampler, respectively, stTV refers to the t-distribution TV prior and MAP-opt is the solution computed using deterministic optimisation approach.} 
\end{center}
\end{figure}

\begin{figure}
\begin{center}
\resizebox{.6\textwidth}{!}{\includegraphics{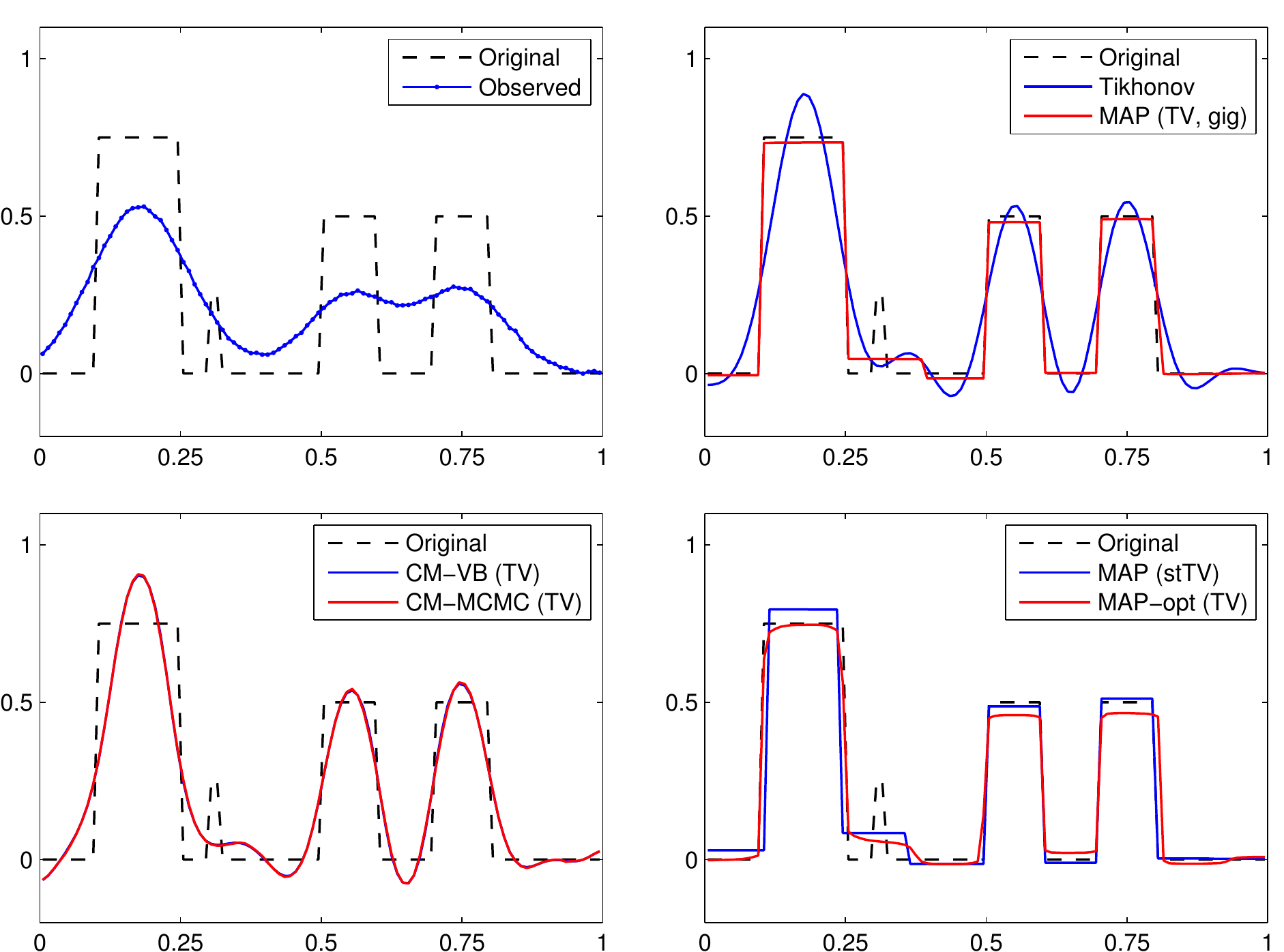}}
\caption{\label{fig:blur1d_demo2} TV regularisation of one-dimensional blocky image. Here the noise level was BSNR$=$30 dB. The algorithms used are the same as in Figure \ref{fig:blur1d_demo1b}.} 
\end{center}
\end{figure}


\begin{figure}
\begin{center}
\resizebox{0.75\textwidth}{!}{\includegraphics{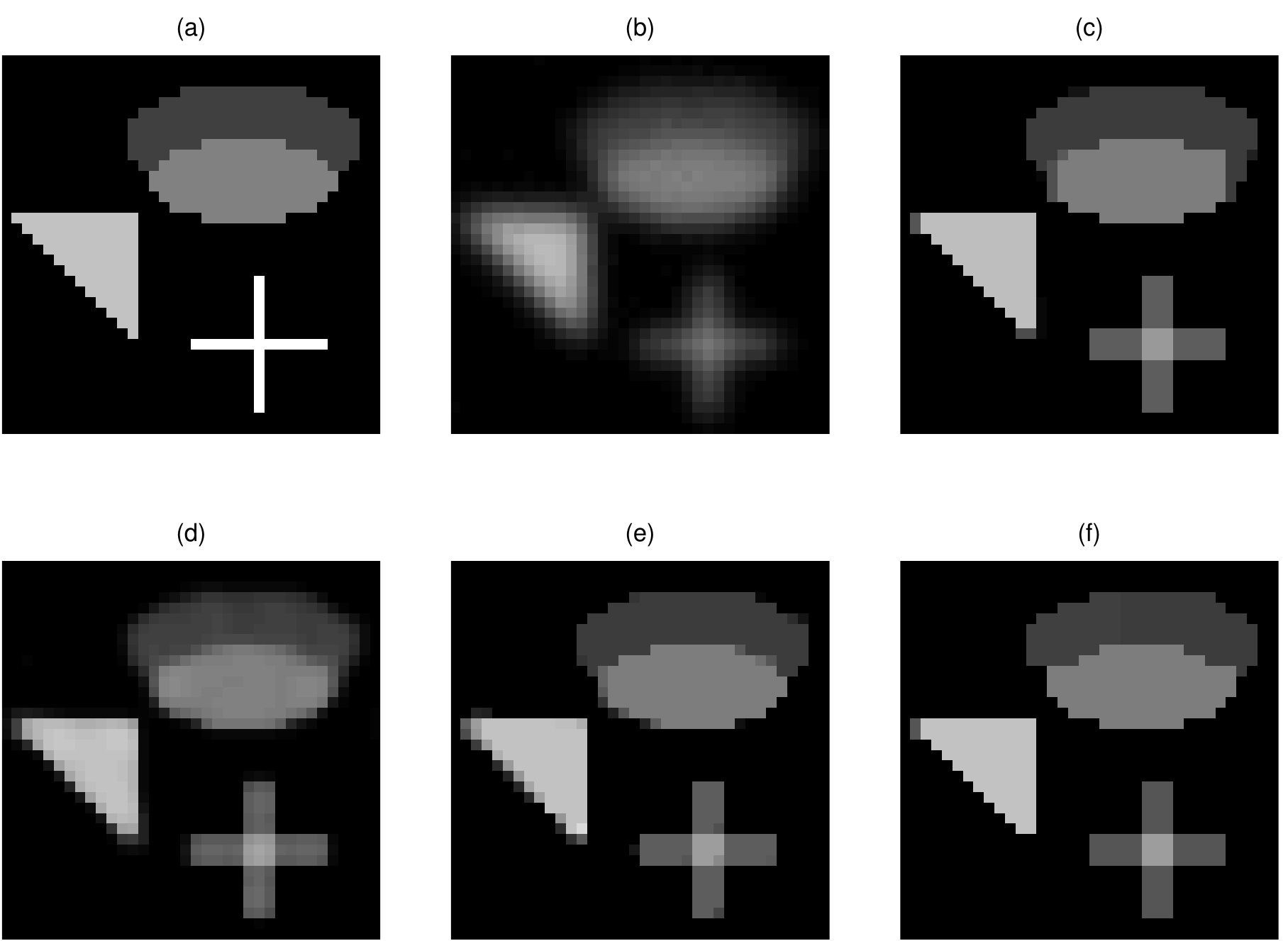}}
\caption{\label{fig:blur2d_demo} (a) The original image. (b) Blurred and noisy image. (c-e) Image reconstruction using the algorithm based on our hierarchical TV model, (c) anisotropic TV (MAP), (d) anistropic TV (CM), (e) 2-dimensional Laplace TV prior (MAP), (f) t-distribution TV prior (MAP).} 
\end{center}
\end{figure}

\begin{figure}
\begin{center}
\resizebox{0.75\textwidth}{!}{\includegraphics{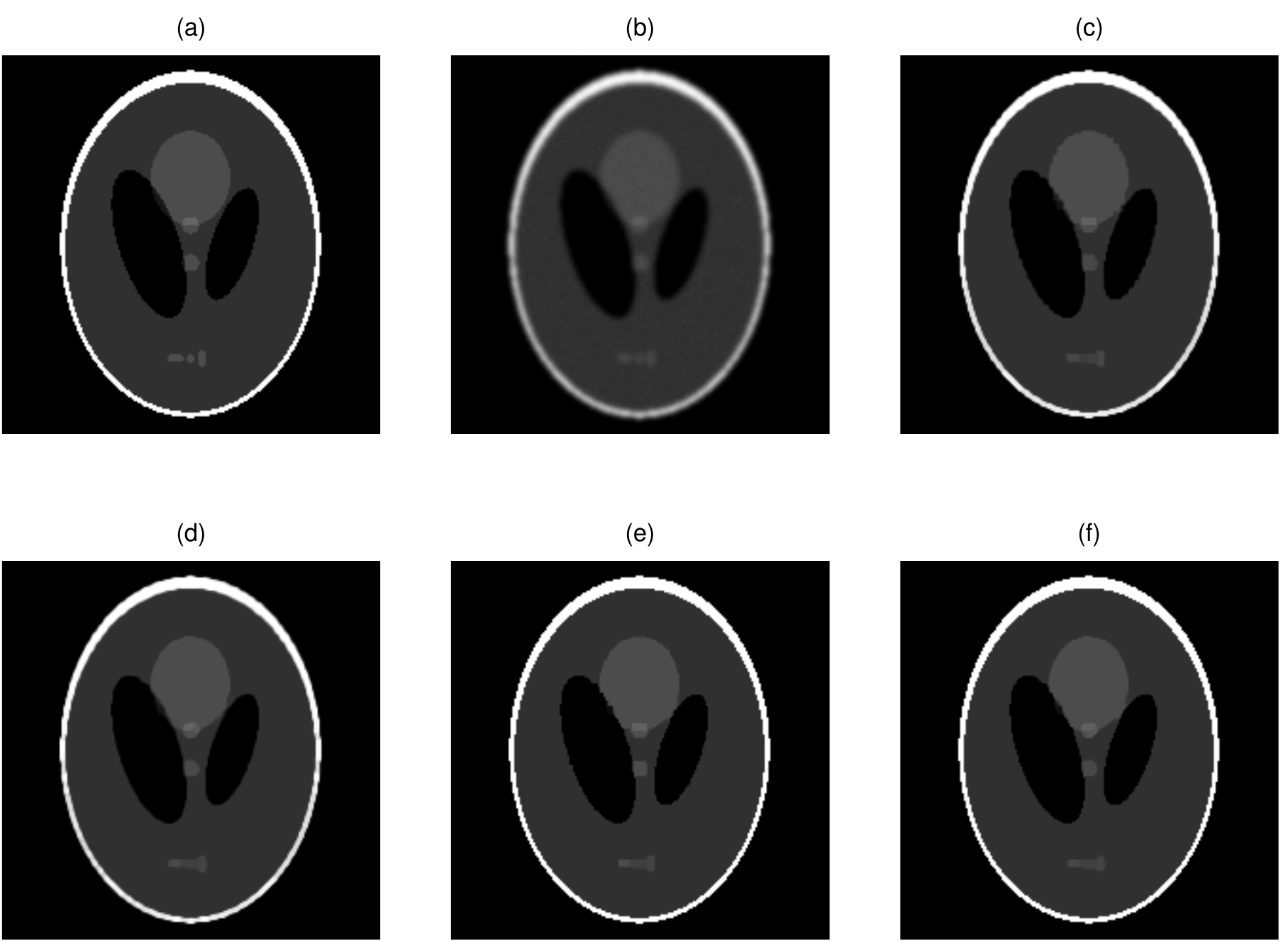}}
\caption{\label{fig:blur2d_phantom_demo} (a) The original image. (b) Blurred and noisy image. (c-e) Image reconstruction using the algorithm based on our hierarchical TV model, (c) anisotropic TV (MAP), (d) 2-dimensional Laplace TV prior (MAP), (e) t-distribution TV prior (MAP), (f) 2-dimensional t-distribution TV prior (MAP).} 
\end{center}
\end{figure}

\label{lastpage}

\end{document}